\documentclass[12pt]{amsart}
\usepackage[T1]{fontenc}
\usepackage{amsmath,amssymb,amsthm,mathtools}
\usepackage{hyperref}

\newcommand{\R}{\mathbb R}
\newcommand{\C}{\mathbb C}
\newcommand{\Z}{\mathbb Z}
\newcommand{\D}{\mathbb D}
\newcommand{\CT}{T_{\C}}
\newcommand{\GL}{\mathrm{GL}}
\newcommand{\SL}{\mathrm{SL}}
\newcommand{\SO}{\mathrm{SO}}
\newcommand{\Spin}{\mathrm{Spin}}
\newcommand{\SU}{\mathrm{SU}}

\newcommand{\diag}{\operatorname{diag}}
\newcommand{\Int}{\operatorname{int}}
\newcommand{\en}{\mathfrak e}
\newcommand{\Real}{\operatorname{Re}}
\newcommand{\Imag}{\operatorname{Im}}
\newcommand{\Mon}{\operatorname{Mon}}

\parskip=\smallskipamount

\newtheorem{theorem}{Theorem}[section]
\newtheorem{lemma}[theorem]{Lemma}

\newtheorem{proposition}[theorem]{Proposition}

\theoremstyle{definition}
\newtheorem{remark}[theorem]{Remark}

\numberwithin{equation}{section}

\makeatletter
\@namedef{subjclassname@2020}{\textup{2020} Mathematics Subject Classification}
\makeatother

\begin{document}

\title[Hyperbolic complex tangencies on a four-sphere]{Hyperbolic complex tangencies\protect\\ on an embedded four-sphere}
\author[Marko Slapar]{Marko Slapar}
\address{Marko Slapar, Faculty of Mathematics and Physics\\ University of Ljubljana \\ Jadranska 19\\1000 Ljubljana, Slovenia\\ \newline
Faculty of Education\\ University of Ljubljana \\ Kardeljeva plo\v{s}\v{c}ad 16\\1000 Ljubljana, Slovenia\\
and\newline
  Institute of Mathematics, Physics and Mechanics\\Jadranska
  19\\1000 Ljubljana, Slovenia}
\email{marko.slapar@fmf.uni-lj.si}

\author[Sa\v{s}o Strle]{Sa\v{s}o Strle}
\address{Sa\v{s}o Strle, Faculty of Mathematics and Physics \\ University of Ljubljana \\ Jadranska 21 \\ 1000 Ljubljana, Slovenia }
\email{saso.strle@fmf.uni-lj.si}
\subjclass[2020]{Primary 32V40; Secondary 57R40, 57R45.}
\keywords{Complex tangency, CR singularity, Bishop invariant.}
\thanks{The first author was supported by the European Union through the ERC Advanced Grant HPDR, grant agreement No. 101053085, 
awarded to Franc Forstneri\v{c}, and by the research program P1-0291 and research project J1-70033 of ARIS, Republic of Slovenia.
The second author was supported by the research program P1-0288 of ARIS, Republic of Slovenia.}
\date{}

\begin{abstract}
We construct a smooth embedding of $S^4$ into $\mathbb C^4$ whose CR-singular locus is a connected surface of genus two consisting entirely of nondegenerate hyperbolic points. Its image is locally polynomially convex and admits a basis of smoothly bounded, strongly pseudoconvex Stein tubular neighbourhoods.
\end{abstract}
\maketitle
\pagestyle{plain}

\section{Introduction}

Let $f: M^n\to\C^n$ be a smooth immersion of an $n$-manifold $M$ into $\C^n$. The complex tangent space of $f(M)$ at a point $p\in M$ is defined as
$T_p^{\C}M=df_p(T_pM)\cap i\,df_p(T_pM).$
A point $p\in M$ is called \emph{totally real} if its complex tangent space is trivial, and \emph{CR-singular} otherwise. The \emph{order} of a CR-singular point is the complex dimension of its complex tangent space. If the immersion $f$ is generic, the set of CR-singular points of order $1$ forms a smooth codimension $2$ submanifold of $M$ \cite{Lai}. If $n\le 7$ and the immersion is generic, all CR-singular points have order $1$, and if $n\le 5$, we can further assume that CR-singular points are nondegenerate and can thus be classified as either elliptic, hyperbolic, or parabolic, according to their Bishop invariant \cite{Webster}.  

Bishop's classification \cite{Bishop} has direct implications for local holomorphic convexity. Elliptic points generate attached analytic discs and a nontrivial $(n+1)$-dimensional local hull of holomorphy, obstructing local polynomial convexity \cite{Huang}. At hyperbolic points, local polynomial convexity and uniform polynomial approximation hold under the additional flatness hypothesis \cite{Alagandala}. At parabolic points, quadratic data alone do not determine local convexity, and higher-order terms become essential, as already occurs for surfaces in $\C^2$ \cite{Joricke}.

CR-singularities have been extensively studied in dimension $2$ (see, for example, \cite{Forstneric}). In dimension $3$, Kasuya and Takase \cite{KT} studied the realization of CR-singular loci, while Elgindi \cite{Elgindi} investigated their Bishop invariants. In dimension $4$, relatively few explicit examples are known.

For a generic immersion $f:M^4\to\C^4$ of a closed oriented $4$-manifold, Webster \cite{Webster} established the identity
\[\chi(M)=\en(f)+e-h+p,\]
where $\en(f)$ denotes the normal Euler number, $e$ and $h$ are the Euler characteristics of the elliptic and hyperbolic loci, respectively, and $p$ is the parabolic index. In the same paper, he asked whether there exists an embedding of $S^4$ into $\C^4$ with no elliptic points. For a generic embedding satisfying the usual transversality conditions, the absence of elliptic points also excludes parabolic points. Since the normal Euler number of an embedding $S^4\hookrightarrow\C^4$ vanishes, the identity then implies that its CR-singular locus has Euler characteristic $-2$. If this locus is connected and orientable, it is therefore a surface of genus two.

In this paper, we answer Webster's question affirmatively by constructing a smooth embedding of $S^4$ into $\C^4$ with no elliptic points.
\begin{theorem}\label{thm:main}
There is a smooth embedding $f:S^4\hookrightarrow\C^4$, smoothly isotopic to the standard embedding in $\R^8$, such that the CR-singular locus is a connected surface of genus two and every CR-singular point is nondegenerate hyperbolic.
\end{theorem}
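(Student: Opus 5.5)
We construct $f$ explicitly, working with graph-type embeddings and then gluing. The starting point is the local normal form of a nondegenerate hyperbolic point in $\C^4$. In coordinates $(z_1,\dots,z_4)$, $z_j=x_j+iy_j$, such a point looks like
\[
y_j=0\ (j=1,2,3),\qquad y_4 = \Real\bigl(\bar w^2+\gamma\, w^2\bigr)+\dots,\quad w=z_1 \text{ (suitably)},
\]
with Bishop invariant $\gamma>\tfrac12$. The guiding principle is that the CR-singular locus of an immersion $M^4\to\C^4$ is the zero set of a section of a rank-two real bundle (essentially a determinant condition on $df$). This lets us prescribe the CR-singular surface $\Sigma$ first and then control the Bishop invariant along it.

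Concretely, the plan has four steps.

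\textbf{Step 1.} Take the standard totally real $S^4\subset\R^5\subset\C^4$ modelled differently. We start from the round sphere in $\R^5$, embedded in $\C^4=\R^8$ by a map of the form $f=(x_1+i\,g_1,\dots)$ with auxiliary functions $g_j$. We choose these so that the complex-tangency condition reduces to the vanishing of a map $\phi:S^4\to\C$ (or $\R^2$). Its zero set is then a surface, which we arrange to be a genus-two surface $\Sigma\subset S^4$. A convenient choice is to let $\Sigma$ be a standard unknotted genus-two surface sitting in an equatorial $S^3$ (a Heegaard surface), with $\phi$ transverse to $0$. Transversality of $\phi$ gives nondegeneracy and order one.

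\textbf{Step 2.} Compute the Bishop-type invariant at each point of $\Sigma$ from the second-order data of $f$ in the normal directions to $\Sigma$. Hyperbolicity is an open condition on the quadratic part, namely that the hermitian-versus-holomorphic quadratic comparison has $\gamma>1/2$, or $\gamma=\infty$ in the flat limit. It therefore suffices to make the normal second fundamental data along $\Sigma$ dominated by the holomorphic quadratic term. We try to achieve this by making $\phi$ locally of the form $\phi\approx \bar w^2$-like and not $|w|^2$-like in a transverse complex coordinate $w$ to $\Sigma$.

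\textbf{Step 3.} This is where Webster's formula constrains us. Since $\en(f)=0$ and $\chi(S^4)=2$, we need $h=-2$ with $e=p=0$, and this is consistent with the genus-two choice. What remains is a global obstruction: the index data of the normal rotation along $\Sigma$ must be compatible. I would first build the local model on a tubular neighbourhood $\Sigma\times D^2$, with the map given by a hyperbolic normal form varying smoothly over $\Sigma$. Then I would check that the complement $S^4\setminus(\Sigma\times D^2)$ admits a totally real extension. By Gromov's h-principle for totally real embeddings, this reduces to extending a section of the associated Lagrangian Grassmannian-type bundle, i.e.\ an obstruction in homotopy of $\GL_4(\C)/\GL_4(\R)$ over a 4-complex relative to the boundary. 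The vanishing of the relevant obstruction is exactly what Webster's identity encodes, so it should vanish.

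\textbf{Step 4.} Having obtained a totally real extension at the level of bundle data, we invoke the h-principle to realize it by an embedding $C^0$-close to the standard one relative to the neighbourhood of $\Sigma$. This gives the smooth isotopy to the standard embedding.

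The main obstacle I expect is Step 3. The difficulty is making the hyperbolic local models along the whole of $\Sigma$ glue consistently, including the twisting of the normal form's complex coordinate around curves in $\Sigma$. At the same time the leftover obstruction class in $\pi_3$ and $\pi_4$ of $\GL_4(\C)/\GL_4(\R)$ must vanish. I would first try to make everything explicit with polynomial $g_j$ so that both $\phi$ and the invariant are computable. Only if that fails would I fall back on the h-principle argument above.
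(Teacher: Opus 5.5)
\textbf{There is a genuine gap.} Your Steps 1--2 are not yet an argument. The functions $g_j$, the map $\phi$, its transversality and the Bishop invariant are never specified. Also, no embedded $S^4\subset\C^4$ is totally real: a totally real embedding has $\en=\chi(S^4)=2$, while every embedding into $\R^8$ has $\en=0$.

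The real content is your fallback in Steps 3--4. Its architecture matches the paper's: a hyperbolic model on a tube over a genus-two surface bounding a handlebody (your Heegaard surface is essentially the paper's $\Sigma=\partial H\subset\R^3\times\{0\}$), a formal totally real extension, Gromov's $h$-principle, and Webster's formula forcing genus two. But both decisive steps are only asserted.

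\textbf{The formal extension.} Your justification for this step is not valid. Webster's identity is a numerical constraint satisfied by actual immersions; it cannot produce a formal extension over $W=S^4\setminus\Int U$.
\begin{itemize}
\item A real monomorphism is totally real iff its complexification is invertible, so the relevant fibre is $\GL(4,\C)$.
\item The primary obstruction therefore lies in $H^2(W,\partial W;\pi_1\GL(4,\C))\cong H^1(\Sigma;\Z)$. Concretely, the class of $\det_{\C}d_{\C}F$ on $\partial W=\Sigma\times S^1$, computed in a global frame of $\CT S^4$, must extend over $W$. Equivalently, it must vanish on Seifert-framed longitudes.
\item This genuinely depends on the model. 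Precompose a good model with $(p,z)\mapsto(p,e^{i\theta(p)}z)$, where $0\neq[\theta]\in H^1(\Sigma)$. This keeps the image and all Bishop invariants, but changes that class on longitudes by $-[\theta]$, so no extension exists.
\end{itemize}
This is exactly the twisting you flagged as the main obstacle and left open.

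The missing idea is an explicit formal solution (Lemma~\ref{prop:formal}). With $F(p,z)=(p+\bar z^2n(p),z)$ one has $d_{\C}F=B\,\diag(1,1,1,2\bar z)$. Here $B$ is invertible and null-homotopic over the tube, using the frame normalization $\det_{\C}(e_1,e_2,n)=1$ and $2$-connectedness of $\SL(4,\C)$. So $B$ extends by a constant, and $2\bar z$ extends via the handlebody to a function $h$ with $h^{-1}(0)=\Sigma$. One then sets $A_{\C}=B\,\diag(1,1,1,h)$.

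\textbf{The embedding step.} Step 4 has a second gap: the embedding version of the $h$-principle needs more than a formal solution. It needs
\begin{enumerate}
\item an embedding $f_0$ of $S^4$ that already equals the model near $\Sigma$ and is isotopic to the standard one (the round sphere does not contain the model tube; see Lemma~\ref{lem:embedding-extension}); and
\item a homotopy through real monomorphisms, fixed near $\Sigma$, from $df_0$ to $A$.
\end{enumerate}
The output is then $\mathcal C^0$-close to $f_0$, not to the standard sphere. Without the homotopy you only get a totally real immersion off $\Sigma$, not an embedding.

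This homotopy, not the existence of $A$, is where Webster's formula actually enters. Homotopy classes rel the tube of monomorphisms $TS^4\to S^4\times\R^8$ are detected by the normal Euler number (Proposition~\ref{lem:relative-euler}, via $\pi_4(V_{8,4})\cong\Z$). Applying Webster's formula to an auxiliary immersion from the immersion $h$-principle gives $\en(A)=2+\chi(\Sigma)$ (Lemma~\ref{prop:normal-euler}). This equals $\en(df_0)=0$ exactly when the genus is two. You conflated this obstruction with the one governing the existence of $A$, which Webster's formula does not address.
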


The constructed sphere also has the following convexity properties.

\begin{theorem}\label{thm:stein}
The sphere $f(S^4)\subset\C^4$ constructed in the proof of
Theorem~\ref{thm:main} is locally polynomially convex and admits
a basis of smoothly bounded, strongly pseudoconvex Stein tubular
neighbourhoods. \end{theorem}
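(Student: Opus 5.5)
The plan is to reduce Theorem~\ref{thm:stein} to two known mechanisms: local polynomial convexity of totally real submanifolds, and of nondegenerate hyperbolic CR-singularities; and the existence of a strictly plurisubharmonic function near $M=f(S^4)$ whose square root, or whose sublevel sets, give Stein tubes. Since every CR-singular point of $f$ is hyperbolic and nondegenerate, there are no elliptic points and hence no attached analytic discs. The only obstruction is therefore whether the hyperbolic points satisfy the flatness hypothesis of \cite{Alagandala}. At totally real points, local polynomial convexity is classical.

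First, I would exploit the explicit construction from the proof of Theorem~\ref{thm:main}. Near the CR-singular surface $\Sigma$ of genus two, the construction presumably puts $f$ in a normal form
\[
w = \bar z_1 z_2+\tfrac{\gamma}{2}(z_1^2+\bar z_1^2)+\dots,\qquad \gamma>\tfrac12,
\]
varying smoothly along $\Sigma$. Here the higher-order terms can be arranged to vanish, or to be of the flat type required. I would check, or if necessary modify $f$ by a small isotopy supported near $\Sigma$, that the local model is quadric-flat in the sense of \cite{Alagandala}. This step should keep all CR-singular points nondegenerate hyperbolic, which is an open condition. Local polynomial convexity at each point then follows.

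Second, I would construct a nonnegative function $\rho$ of class $C^2$ near $M$ with $M=\{\rho=0\}$ and $\rho$ strictly plurisubharmonic off $M$ on a neighbourhood. At totally real points, take $\rho=\mathrm{dist}^2(\cdot,M)$, which is strictly plurisubharmonic near $M$. Near a hyperbolic point, use the model coordinates: $M$ is the graph $w=\Phi(z)$ with $\Phi$ quadratic, and one writes $\rho=|w-\Phi(z)|^2+\varepsilon\,\psi$. Here $\psi$ is a plurisubharmonic correction in the directions tangent to $M$ that vanishes on $M$. Hyperbolicity ($\gamma>1/2$) is exactly what allows the Levi form of $|w-\Phi|^2$, restricted to the complex tangent, to be compensated by a quadratic form such as $|\Real z_1|^2$-type terms adapted to the real slice. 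I would patch the local functions with a partition of unity, using that sums of functions of the form $|h|^2$ that vanish to second order on $M$ remain strictly plurisubharmonic in normal directions after scaling. The sublevel sets $\{\rho<\delta\}$ are then strongly pseudoconvex and smoothly bounded for small $\delta$, since $d\rho\ne0$ off $M$ by the quadratic growth. They retract onto $M$, so they are tubular, and they are Stein by Grauert's solution of the Levi problem. The same function gives local polynomial convexity, via Hörmander's $L^2$ theory and Runge-type approximation, in case the flatness route is not needed.

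The main obstacle is the plurisubharmonicity at CR-singular points. There the naive distance-squared function fails to be strictly plurisubharmonic along the complex tangent line, and the correction must be uniform along the whole surface $\Sigma$ while still gluing to the totally real region. I expect this to require the specific normal form from the construction rather than general theory. I would first try to verify it directly on the explicit model, with $\gamma$ bounded away from $1/2$ uniformly along $\Sigma$ by compactness.
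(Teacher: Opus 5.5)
Your proposal has a genuine gap at exactly the step you flag as the main obstacle: the plurisubharmonic defining function near the CR-singular surface is never constructed, and your heuristic does not produce one.

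First, you need neither a guessed normal form nor a flatness-restoring isotopy. Modifying $f$ would also change the sphere the theorem is about. Near $\Sigma$ the constructed $M$ is exactly the quadric $(x_1,x_2,\bar z^{\,2},z)$, of Bishop invariant $\infty$.

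Second, hyperbolicity alone is not the operative mechanism. For a quadric $w=\phi(z)=|z|^2+\gamma(z^2+\bar z^2)$ with finite $\gamma$, your leading term $|w-\phi|^2$ has $z\bar z$-entry $2|\phi_z|^2-2\Real(w-\phi)$. This is negative at $z=0$, $w>0$, arbitrarily close to the CR point. Terms adapted to the real slice give positivity only in the totally real directions, not in the complex-tangent direction $\partial_z$ where the degeneracy sits.

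The paper instead exploits the antiholomorphic choice $q=\bar z^{\,2}$. Let $P,W$ be the holomorphic extensions of the nearest-point projection and signed distance of the real-analytic $\Sigma$. Then $M$ is given along all of $\Sigma_0$ by $\Imag P=0$, $W=\bar z^{\,2}$, so $|\Imag P|^2+|W-\bar z^{\,2}|^2$ is automatically plurisubharmonic. It is degenerate in $\partial_z$ on the whole hyperplane $\{z=0\}$, not only on $\Sigma_0$. The factor $(1+|z|^2)$ in $\Phi$ adds $|\Imag P|^2+|W|^2$ to the $z\bar z$-entry there. The Schur complement identity \eqref{eq:aux-Schur} then gives strict plurisubharmonicity off $\Sigma_0$ for $|z|<1$.

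Your patching step also fails as stated. Any $C^2$ function $\rho\ge0$ vanishing on $M$ has $d\rho=0$ and real Hessian zero on $T_qM$. Hence its Levi form vanishes on $T^{\C}_qM$ for $q\in\Sigma_0$. Near $\Sigma_0$ there is therefore no uniform positivity to absorb the terms $\partial\chi_j\wedge\bar\partial(\rho_j-\rho_k)$ that a partition of unity introduces. These vanish on $M$ but are $O(\mathrm{dist})$ off it. For $\Phi$, the positivity available at $z=0$ is only $|\Imag P|^2+|W|^2$, which is $O(\mathrm{dist}^2)$.

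So local model functions around different points of $\Sigma$ cannot be glued. You need one function defined along all of $\Sigma_0$, which is why the paper uses the global extensions $P,W$. The gluing to $\mathrm{dist}^2(\cdot,M)$ may happen only over a compact subset of $M\setminus\Sigma_0$, where both Levi forms are positive definite on $M$ and continuity absorbs the cutoff terms.

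Quadratic growth also does not give $d\rho\neq0$ near $M$. You need a nondegenerate normal Hessian; the paper checks $\operatorname{Hess}\Phi=2(1+|z|^2)\sum_j da_j\otimes da_j$ with $\operatorname{rank} da=4$.

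Once $\rho$ exists, local polynomial convexity is immediate. For $K=M\cap\overline B(q,s)$ one has $\widehat K\subset\overline B(q,s)\subset\mathcal V$, and $\rho\le\sup_K\rho=0$ on $\widehat K$. So the route through \cite{Alagandala} is unnecessary. It is plausible, since $M$ is exactly a quadric near $\Sigma$. The $L^2$ methods are not needed either.
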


For a real vector bundle $E$, write $E_{\C}=E\otimes_{\R}\C$, and in particular $\CT M=TM\otimes_{\R}\C$. For a four-manifold $M$ and a real-linear bundle map $A:TM\to M\times\C^4$, let
$A_{\C}:\CT M\to M\times\C^4$ denote its complex-linear extension. A real monomorphism $A$ is totally real precisely when $A_{\C}$ is an isomorphism.

\section{A tubular model and its formal extension}

Fix a smooth compact handlebody $H\subset\R^3$ with real-analytic boundary $\Sigma=\partial H$. We regard $H$ as $H\times\{0\} \subset \R^4=\R^3\times\R$. We identify a sufficiently small tubular neighbourhood of $\Sigma$ with $\Sigma\times\D_\rho$, where $\D_\rho \subset \C$ is the open disk of radius $\rho>0$ centered at the origin, using parametrization
\begin{equation}\label{eq:tube}
 \Psi(p,u+iv)=(p+u n(p),v),
\end{equation}
where $n$ denotes the outward unit normal vector field to $\Sigma$ in $\R^3$.

We start by embedding the model tubular neighbourhood $\Sigma\times\D_\rho$ into $\C^4$ by 
\begin{equation}\label{eq:q}
 F(p,z)=\left(p+\bar z^2n(p),z\right),
\end{equation}
where $\C^4 = \R^4 \otimes_{\R} \C$ is considered as the complexification of $\R^4$.
Since
$\Sigma$ is real analytic, near each $(p_0,0)$ there are local
holomorphic coordinates $(\eta_1,\eta_2,w,z)$, leaving the last
coordinate unchanged, in which the model is parametrized by
\begin{equation}\label{eq:normal-form}
  (x_1,x_2,z)\longmapsto(x_1,x_2,\bar z^{\,2},z),
  \qquad x_1,x_2\in\R,\quad z\in\C.
\end{equation}
Consequently, its CR-singular locus is $\Sigma\times\{0\}$, and all
its complex tangencies are nondegenerate hyperbolic points of Bishop
invariant $\infty$ \cite[\S1]{Webster}.

\begin{remark}
For the embedding construction in Theorem~\ref{thm:main}, the particular choice of quadratic perturbation $q(z,\bar z)=\bar z^2$ serves only to simplify the formulas. The construction works equally well with any fixed homogeneous quadratic polynomial
\[q(z,\bar z)=az^2+bz\bar z+c\bar z^2,\qquad |b|<2|c|.\]
So the Bishop invariant along $\Sigma$ can be any $\gamma=|c|/|b|\in (1/2,\infty].$ We retain the choice $q(z,\bar z)=\bar z^2$ for the Stein neighbourhood construction in Section~\ref{sec:stein}.
\end{remark}

\begin{lemma}\label{lem:embedding-extension}
After shrinking the tube, $F$ is the restriction of a smooth embedding $f_0:S^4\hookrightarrow\C^4$ which is smoothly isotopic to the standard embedding.
\end{lemma}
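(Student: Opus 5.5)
Since $F$ is a small $C^\infty$ perturbation of the map $\Psi$ of \eqref{eq:tube}, composed with the inclusion $\R^4\subset\R^4\oplus i\R^4=\C^4$, the plan is to extend the standard picture first and then interpolate. Write $z=u+iv$. The map $F_0(p,z)=(p+un(p),v)$ is the real embedding $\Psi$ followed by $\R^4\hookrightarrow\C^4$. Its image is a tubular neighbourhood $N$ of $\Sigma$ inside the boundary of a smoothed copy of $H\times[-1,1]$, or more precisely inside the boundary of a compact $4$-ball $B\subset\R^4$. We regard $S^4$ as $\partial B$ after smoothing corners. To arrange this, take $B$ to be the ``double'' of $H$ thickened in the $\R$-direction, for instance $B=\{(x,t):x\in H_\epsilon,\ |t|\le \phi(x)\}$, where $H_\epsilon$ is a slight enlargement of $H$. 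Since $H$ is a handlebody, $B\cong H\times[-1,1]$ is a $4$-ball, and $\partial B$ is a smoothly embedded $S^4\subset\R^4$ that is isotopic in $\R^8$ to the round sphere. The key point is that $\partial B\cong\partial(H\times I)=H\cup_\Sigma H$, and near $\Sigma\times\{0\}$ this boundary is not of the form $\Psi(\Sigma\times\D_\rho)$, because $\Psi$ parametrizes an open subset of $\R^4$. So we take a different approach.

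Instead, we view $\Sigma\times\D_\rho$ directly as a tubular neighbourhood of $\Sigma$ inside $S^4=\partial(H\times D^2)$, using $S^4=\partial(H\times D^2)=(\Sigma\times D^2)\cup(H\times S^1)$. We embed $S^4$ into $\R^4\times\R^4=\C^4$ so that, near $\Sigma\times\{0\}$, it agrees with $F_0(p,z)=(p+un(p),v)$ in the real part and vanishes in the imaginary part. Here $u$ moves off $\Sigma$ in the normal direction in $\R^3$ and $v$ is the fourth coordinate. The complement of $\Sigma\times\D_{\rho/2}$ in $S^4$ is $H\times S^1$ with collars, and it must be embedded in the remaining room. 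Concretely, choose $f_{00}:S^4\to\R^4\times\R^4$ extending $F_0$, for example by mapping $H\times S^1$ via $(x,\theta)\mapsto (x+\text{(outward push)},\,\sin\theta\text{-type coordinate},\,\text{extra coordinates})$ using the additional four real directions to separate sheets. Because we have four spare dimensions and only need an embedding of a $4$-manifold in $8$ dimensions that extends a given embedding on a closed neighbourhood, this is achievable. One realization is the standard unknotted $S^4=\partial(H\times D^2)\subset\partial(H\times D^2\times D^2)$-style picture, in which $H\times D^2$ embeds in $\R^5$ as a $5$-ball whose boundary is the standard $S^4\subset\R^5\subset\R^8$. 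Choose the embedding $H\times D^2\subset\R^3\times\R^2$ as $(x,z)\mapsto(x,z)$ (after rounding), with the $z$-coordinate identified near $\Sigma$ with $(u,v)$ via $\Psi$, reparametrized to match. Then $f_{00}(S^4)$ is the boundary of a smooth $5$-ball in a real $5$-plane, hence isotopic to the standard embedding.

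Next, we interpolate to $F$. Take a cutoff $\chi(|z|)$ equal to $1$ on $|z|\le\rho/2$ and $0$ near $|z|=\rho$, and define $f_0=f_{00}+\chi\,(\bar z^2 n(p),0)$ on the tube, with $f_0=f_{00}$ elsewhere. The perturbation is $O(\rho^2)$ in $C^0$ and $O(\rho)$ in $C^1$, because $\chi'=O(1/\rho)$ multiplies $|z|^2$. After shrinking $\rho$, $f_0$ is therefore $C^1$-close to the embedding $f_{00}$ on a compact set, so it is an immersion and injective, hence an embedding. The straight-line homotopy $f_{00}+t\chi(\bar z^2n,0)$ consists of embeddings for the same reason, so $f_0$ is isotopic to $f_{00}$ and thus to the standard embedding.

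The main obstacle is the first step: producing an honest embedding of all of $S^4$ that agrees exactly with $F_0$ near $\Sigma$ and is visibly standard. A further subtlety is that on the shrunk tube the relevant scale is the tube radius itself, so the $C^1$ estimate must be made uniform as $\rho\to0$. For this we use the real-analytic normal form \eqref{eq:normal-form} and the fact that $f_{00}$ is an embedding with injectivity radius bounded below on the fixed neighbourhood.
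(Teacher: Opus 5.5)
The proposal has a genuine gap in its first sentence. $F$ is \emph{not} a small perturbation of $F_0$, that is, of $\Psi$ followed by $\R^4\subset\C^4$: the two maps differ at first order. In \eqref{eq:q} the last complex coordinate of $F$ is $z=u+iv$ itself. So at $(p,0)$ you have $dF(\partial_u)=(0,1)$ and $dF(\partial_v)=(0,i)$, whereas $dF_0(\partial_u)=(n(p),0)$ and $dF_0(\partial_v)=(0,1)$.

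No $\mathcal C^1$-small correction can bridge this. $F_0$ takes values in $\R^4$, so it is totally real; total reality is a $\mathcal C^1$-open condition; and $F$ has a complex tangent at every point of $\Sigma$. Concretely, your map $f_{00}+\chi\,(\bar z^2 n(p),0)$ equals $(p+(u+\bar z^{2})n(p),v)$ near $\Sigma$, not $F$, so your $O(\rho)$ estimate establishes the wrong statement.

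The missing step is a first-order isotopy that turns the normal disc of $\Sigma$ into the complex line $\{0\}\times\C$. The paper does this as follows:
\begin{itemize}
\item $N(p)=(n(p),0)$, $a=(0,1)$, $b=(0,i)$ form a global orthonormal frame of a $3$-dimensional subspace $V_p$ of the normal space of $\Sigma$;
\item in this frame it applies a fixed path in $\SO(3)$ from the identity to the cyclic permutation, so $uN+va$ is carried to $ua+vb$;
\item it adds $s\,(\bar z^2 n(p),0)$;
\item it checks that the result is an isotopy of embeddings of a thin tube and invokes the isotopy extension theorem.
\end{itemize}
Your cutoff argument would be valid if $f_{00}$ agreed with the linear model $(p,z)\mapsto(p,z)$ near $\Sigma$. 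But producing a standard sphere with that property amounts to this rotation step, which the proposal does not supply.

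Your global models of $S^4$ are also wrong once $H$ has positive genus, and the theorem needs genus two. The spaces $H\times[-1,1]\cong\natural^g(S^1\times D^3)$ and $H\times D^2\cong\natural^g(S^1\times D^4)$ are not balls, and $\partial(H\times D^2)\cong\#^g(S^1\times S^3)\neq S^4$. Correspondingly, the complement of a tube around the unknotted surface $\Sigma\subset S^4$ has fundamental group $\Z$, not $\pi_1(H\times S^1)$, which is a free group of rank $g$ times $\Z$. In your first attempt, the boundary of a compact domain $B\subset\R^4$ is three-dimensional, so it cannot be $S^4$ at all. So the step you flag as the main obstacle is never carried out.

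The paper avoids any explicit global model. It first isotopes the standard sphere so that it contains a flat patch of $\R^3\times\R$ containing $H\times\{0\}$. This makes $F_0$ the restriction of a standard sphere for free, and $f_0$ is then obtained from it by an ambient isotopy.
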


\begin{proof}
Start with a preliminary ambient isotopy that deforms the standard sphere so that the resulting sphere has a flat coordinate patch in
$\R^3\times\R\subset\C^3\times\C$ that contains $H\times\{0\}$.  In the coordinates~\eqref{eq:tube}, the embedding of the tubular neighbourhood is given by
\[F_0(p,u+iv)=(p+u n(p),v).\]
We first construct an isotopy of embeddings from $F_0$ to $F$. Write $N(p)=(n(p),0)$, $a=(0,1)$, and $b=(0,i)$ in
$\C^3\times\C$. These vectors form an orthonormal basis of the three-dimensional real subspace $V_p=\operatorname{span}_{\R}\{N(p),a,b\}$ of the normal space to $\Sigma\times\{0\}$ at $(p,0)$. Choose a smooth path of matrices in $\mathrm{SO}(3)$ from the identity to the cyclic permutation of the standard basis. For each $p$, let $R_s(p):V_p\to V_p$ be represented by this path in the ordered basis $(N(p),a,b)$. This gives a smooth family in $(p,s)$, with $R_0(p)=I$ and $
  R_1(p)N(p)=a$, $R_1(p)a=b$, $R_1(p)b=N(p)$.
Define
\[
  F_s(p,z)=(p,0)+R_s(p)\bigl(uN(p)+va\bigr)
                 +s\bigl(\bar z^2n(p),0\bigr).
\]
The endpoints are $F_0$ and $F$. Every displacement from $(p,0)$ lies in the ambient normal fibre of $\Sigma$ at $p$. On each such fibre, the map of the disk is an isometric linear inclusion plus a quadratic term whose $\mathcal C^1$ norm tends uniformly to zero with the radius. Consequently, for a sufficiently small common radius, $F_s$ is an isotopy of embeddings of the closed tube. The isotopy extension theorem~\cite{Hirsch} extends this isotopy to the ambient space and hence to the whole sphere.
\end{proof}

\begin{lemma}\label{prop:formal}
There exists a smooth real bundle monomorphism
\[
 A:TS^4\longrightarrow S^4\times\C^4
\]
which equals $dF$ on a neighbourhood of $\Sigma$ and is totally real on $S^4\setminus\Sigma$.
\end{lemma}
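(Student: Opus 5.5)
The plan is to reduce the statement to an extension problem for complex trivializations and to settle it by obstruction theory on the complement of the tube. A real monomorphism $A:TS^4\to S^4\times\C^4$ is totally real exactly when $A_{\C}:\CT S^4\to S^4\times\C^4$ is an isomorphism. Fix a closed tube $U=\Psi(\Sigma\times\overline{\D}_{\rho'})$ with $\rho'<\rho$ small, and let $W=S^4\setminus\Int U$, a compact $4$-manifold with boundary $\partial W\cong\Sigma\times S^1$. On a collar of $\partial W$ we set $A=dF$. By the normal form \eqref{eq:normal-form} this map is totally real off $\Sigma\times\{0\}$, so $dF_{\C}$ is a complex trivialization of $\CT S^4$ over the collar. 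It then suffices to extend this boundary trivialization to a trivialization $\Phi$ of $\CT W$ and to put $A=\Real$-part of $\Phi$ restricted to $TW\subset\CT W$; totally realness of $A$ is equivalent to $A_{\C}=\Phi$ being invertible. The extension problem is for a section of a bundle with fibre $\GL(4,\C)\simeq U(4)$. Its obstructions lie in $H^{k+1}(W,\partial W;\pi_k U(4))$, and only $k=1$ and $k=3$ contribute. By Lefschetz and Alexander duality, $H^2(W,\partial W)\cong H_2(W)\cong H^1(\Sigma)\cong\Z^4$ and $H^4(W,\partial W)\cong\Z$.

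\emph{First step: the degree-two obstruction.} I will compare $dF_{\C}$ on $\partial W$ with the reference trivialization $\iota_{\C}$. This reference is the complexification of the inclusion of the flat patch $\R^4\subset\C^4$ supplied by the preliminary isotopy in Lemma~\ref{lem:embedding-extension}, and it extends over all of $W$ near that patch. On the boundary the two trivializations differ by a clutching map $g:\Sigma\times S^1\to\GL(4,\C)$. The primary obstruction is $\delta[\det g]$, where $[\det g]\in H^1(\partial W)$ and $\delta:H^1(\partial W)\to H^2(W,\partial W)$ is the connecting map. From \eqref{eq:normal-form} I compute $dF_{\C}$ at $(x,z)$ with $z\ne0$: the only non-real contribution comes from the $(\bar z^2,z)$ block. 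So $\det g$ winds by a fixed integer along the meridian $\{p\}\times S^1$ and has zero winding along loops $\gamma\times\{z_0\}$ in $\Sigma$, because the $x$-directions are real there. Hence $[\det g]$ is a multiple of the class $\theta$ that is $1$ on the meridian and $0$ on the $v$-pushoffs of loops of $\Sigma$.

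This $\theta$ is the restriction of the linking-number class generating $H^1(W)\cong\Z$: the meridian links $\Sigma$ once, and loops of $\Sigma$ pushed in the $\R$-direction of $\R^3\times\R$ have linking number zero. Therefore $\delta\theta=0$ and the first obstruction vanishes. This gives an extension over the $3$-skeleton of $W$ relative to $\partial W$.

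\emph{Second step: the top obstruction, which I expect to be the main difficulty.} The remaining obstruction is an integer in $H^4(W,\partial W;\pi_3U(4))\cong\Z$. In general it cannot be removed by changing choices on lower skeleta, so it has to be shown to vanish. My approach is to identify it with the Lai--Webster index count. For any generic immersion, the defect of a totally real extension away from the CR-singular locus is governed by the identity $\chi(M)=\en(f)+e-h+p$: summing local indices at the complex points of a generic perturbation of $\Phi$ inside $W$ gives exactly this relative obstruction.

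Concretely, I will take $f_0$ from Lemma~\ref{lem:embedding-extension}, perturb it generically on $\Int W$, and read off the obstruction as the contribution of the CR-singular points of $f_0$ inside $W$. Webster's formula applied to $f_0$ gives $2=\chi(S^4)=0+e-h+p$. The tube contributes $-h=-\chi(\Sigma)=2$, since $\Sigma\times\{0\}$ consists of hyperbolic points and has genus two. Hence the interior contribution is $0$, the top obstruction vanishes, and the extension exists. If identifying the obstruction with Webster's count turns out to be delicate, my fallback is to compute it directly: evaluate the degree of the clutching map $g$ on $\partial W$ against the fundamental class, via $c_2$ of $\CT S^4$ relative to the two trivializations.

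Smoothness of $A$ and the equality $A=dF$ on a neighbourhood of $\Sigma$ follow because the extension is constructed relative to the collar.
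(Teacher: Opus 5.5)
Your first step is correct. The primary obstruction is $\delta[\det g]$, and $[\det g]$ is $\pm$ the restriction of the linking class with $\Sigma$, so it vanishes. This is exactly what the paper's function $h$ in \eqref{eq:global-h}, built from $H\times\{0\}$, realizes. One repair: compute the windings from the global formulas \eqref{eq:differential}. There $\det(d_\C F)/\det(d_\C F_0)$ is $\bar z$ times a function close to a nonzero constant. The normal form \eqref{eq:normal-form} is only local and cannot be used along loops of $\Sigma$.

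The gap is in the second step. Since $\pi_2\GL(4,\C)=0$ and $H^1(W,\partial W;\Z)\cong H_3(W)=0$, the top obstruction $o\in H^4(W,\partial W;\Z)$ is determined by $g$ alone. The interior Webster count you want to equate it with depends on which real extension you use. Change $df_0$ on a small ball in $\Int W$ by a generator of $\pi_4(V_{8,4})$, realized by an immersion via Smale--Hirsch. This leaves $g$ unchanged, shifts $\en$ by $\pm2$, and so by Webster's formula shifts the interior count by $\mp2$. Hence the asserted identity ``obstruction $=$ sum of local indices'' is false, and the proposal offers no argument for it.

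The statement itself exposes this. The lemma is for an arbitrary handlebody; genus two is chosen only in Section~4. Your recipe gives interior count $2+\chi(\Sigma)$ for $f_0$, which is nonzero unless the genus is two. Yet the required $A$ exists in every genus; by Lemma~\ref{prop:normal-euler} it has $\en(A)=2+\chi(\Sigma)$, so it is just a different real extension from $df_0$. The vanishing you get in genus two is a numerical coincidence, not a proof. The fallback is not carried out: $g$ is not $\SL$-valued, and computing its degree needs information about $d_\C F$ over the whole tube that you never extract.

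The missing idea is the factorization \eqref{eq:factorization}. In the frame $\mathcal E$, which is defined on the whole tube $U$, one has $d_\C F=B\diag(1,1,1,2\bar z)$ with $B$ from \eqref{eq:B} invertible on all of $U$, including $\Sigma$. The paper uses it directly: it extends the frame and $B$ over $S^4$ and replaces $2\bar z$ by $h$.

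The same factorization also finishes your obstruction-theoretic argument. On $\partial W=\partial U$ write
\[
g=B\,\diag(1,1,1,2\bar z)\,S,\qquad S=(\iota_\C\mathcal E)^{-1},
\]
where $B$ and $S$ are maps $U\to\GL(4,\C)$. Divide $g$ by $\diag(1,1,1,\det g)$, which your first step allows. The remaining obstruction is then $\langle g^*x_3,[\partial W]\rangle$ via $\delta\colon H^3(\partial W)\cong H^4(W,\partial W)$, where $x_3$ generates $H^3(\mathrm U(4);\Z)$. Since $x_3$ is primitive and restricts to zero on $\mathrm U(1)$, one gets $g^*x_3=(B^*x_3+S^*x_3)|_{\partial U}$. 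This class is restricted from $U$, so it evaluates to zero on $[\partial U]$, because $[\partial U]=0$ in $H_3(U)$.
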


\begin{proof}
The complex rank-two bundle $\CT\Sigma$ is trivial. Choose a frame $e_1,e_2$ with
\begin{equation}\label{eq:frame-normalization}
 \det_{\C}(e_1,e_2,n)=1,
\end{equation}
where $n$ is the normal vector field and the three vectors span $C^3 \equiv C^3 \times \{0\}$. 
Using the product identification of the tube, extend
$e_1,e_2$ independently of $z$. Together with
$\partial_z,\partial_{\bar z}$, they give a complex frame
of $\CT S^4$ over the tube:
\begin{equation}\label{eq:adapted-frame}
 \mathcal E=(e_1,e_2,\partial_z,\partial_{\bar z}).
\end{equation}

The bundle $\CT S^4$ is trivial. By~\eqref{eq:frame-normalization}, the determinant of~\eqref{eq:adapted-frame} is constant along the surface. Hence a global complex frame $\mathcal G=(E_1,E_2,E_3,E_4)$ of $\CT S^4$ can be normalized so that
\[\det_{\C}(\mathcal G) =\det_{\C}(\mathcal E)\] 
over the tube. Its transition map from $\mathcal G$ to $\mathcal E$ therefore takes values in $\SL(4,\C)$. Write $\mathcal E=\mathcal G T$ on the tube $U$, where $T:U\to\SL(4,\C)$ is the transition map. Since $\SL(4,\C)\simeq\SU(4)$ is $2$-connected and the tube deformation retracts onto the two-dimensional surface $\Sigma$, this transition map is null-homotopic. Choose a smooth null-homotopy
\[H:U\times[0,1]\longrightarrow\SL(4,\C),\qquad H(x,0)=T(x),\quad H(x,1)=I.\]
Choose a smaller tube $U'\Subset U$ and a smooth function $\chi:S^4\to[0,1]$ that equals $0$ on $U'$ and $1$ on a neighbourhood of $S^4\setminus U$. Define
\[
\widetilde T(x)=
\begin{cases}
H(x,\chi(x)), & x\in U,\\
I, & x\notin U.
\end{cases}
\]
Since $\chi=1$ near the boundary of $U$, this defines a smooth map on $S^4$. Moreover, $\widetilde T=T$ on $U'$. Thus $\mathcal G\widetilde T$ is a global complex frame of $\CT S^4$ agreeing with $\mathcal E$ on $U'$. Denote this global frame again by $\mathcal E$, and shrink the tube so that it is contained in $U'$.

Let $S_p=dn_p\colon T_p\Sigma\to T_p\Sigma$, extended complex-linearly. For a tangent vector $\xi$ to $\Sigma$,
\begin{equation}
 d_{\C}F(\xi)=((I+\bar z^2S_p)\xi,0),\quad
 d_{\C}F(\partial_z)=(0,1),\quad
 d_{\C}F(\partial_{\bar z})=(2\bar zn(p),0).
 \label{eq:differential}
\end{equation}
In the frame $\mathcal E$ define, on the tube, the invertible matrix
\begin{equation}\label{eq:B}
 B(p,z)=
 \begin{bmatrix}
 (I+\bar z^2S_p)e_1 &(I+\bar z^2 S_p)e_2 &0&n(p)\\
 0&0&1&0
 \end{bmatrix}.
\end{equation}
Equations~\eqref{eq:differential} give
\begin{equation}\label{eq:factorization}
 d_{\C}F=B(p,z)\diag(1,1,1,2\bar z).
\end{equation}
At $z=0$, the normalization~\eqref{eq:frame-normalization} gives $\det_{\C}B(p,0)=-1$. As for $T$ above it follows that $B$ is null-homotopic in $\GL(4,\C)$ over the tube. Using the same collar construction as above, deform $B$ to a constant invertible matrix (of negative determinant) across a collar surrounding a smaller tube, keeping $B$ unchanged on that tube. Extending by that constant matrix beyond the collar gives a smooth map
\[B:S^4\longrightarrow\GL(4,\C)\]
agreeing with the original $B$ on the smaller tube.

It remains to extend the scalar factor $2\bar z$ without adding zeros. Choose a smooth function $r:\R^3\to\R$ which is negative on $\Int H$, positive outside $H$, and equals the signed distance $u$ near $\Sigma$. Let $t$ be the fourth real coordinate. Choose a compactly supported smooth function $\eta:\R^3\times\R\to[0,1]$ equal to $1$ on a neighbourhood of $H\times\{0\}$ and of the tube. Put
\begin{equation}\label{eq:global-h}
 h(x,t)=2\left(1-\eta(x,t)+\eta(x,t)r(x)\right)-2i\eta(x,t)t.
\end{equation}
Outside a compact set, $h=2$, so it extends smoothly over the rest of the sphere. If $h=0$, its imaginary part implies either $\eta=0$ or $t=0$. The first possibility is excluded by its real part. When $t=0$ and $x\in H$, we have $\eta=1$, so $h=0$ exactly on $\partial H=\Sigma\times\{0\}$; when $x\notin H$, its real part is strictly positive. Hence
\[
 h^{-1}(0)=\Sigma,\qquad
 h\circ\Psi=2u-2iv=2\bar z
 \quad\text{near }\Sigma.
\]

Using the global frame $\mathcal E$, define
\begin{equation}\label{eq:formal-A}
 A_{\C}=B\diag(1,1,1,h),\qquad A=A_{\C}|_{TS^4}.
\end{equation}
Near $\Sigma$, this is $dF$ by~\eqref{eq:factorization}. On the complement of $\Sigma$, $A_{\C}$ is an isomorphism, and hence $A$ is a totally real monomorphism. On $\Sigma$, it is a real monomorphism because it equals the differential of the embedding $F$.
\end{proof}

\section{The relative obstruction and the normal Euler number}

Give $S^4$ its standard orientation and $\C^4$ its complex orientation. For a real bundle monomorphism $A:TS^4\to S^4\times\R^8$, equip its normal bundle
\[
 \nu_A=(S^4\times\R^8)/A(TS^4)
\]
with the induced orientation, and let
\[
 \en(A)=\langle e(\nu_A),[S^4]\rangle
\]
be the normal Euler number of $A$. For an immersion $g$, put $\en(g)=\en(dg)$.

\begin{proposition}\label{lem:relative-euler}
Let $U\subset S^4$ be a compact tubular neighbourhood of a closed surface, and let $A_0,A_1:TS^4\to S^4\times\R^8$ be smooth real bundle monomorphisms which agree on a neighbourhood of $U$. They are homotopic through real bundle monomorphisms, fixed on a neighbourhood of $U$, if and only if
\[
 \en(A_0)=\en(A_1).
\]
\end{proposition}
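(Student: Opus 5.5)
The argument is relative obstruction theory for sections of the bundle of real monomorphisms. Trivialize $TS^4$ over the complement of a point, or more invariantly consider the fibre bundle $\mathcal M\to S^4$ whose fibre over $x$ is the space of injective linear maps $T_xS^4\to\R^8$. This fibre is homotopy equivalent to the Stiefel manifold $V_4(\R^8)=\SO(8)/\SO(4)$, which is $3$-connected with $\pi_4(V_4(\R^8))\cong\Z$, since $8-4-1=3$ and the first nonzero homotopy group is in degree $4$ and is $\Z$ because $8-4$ is even. Real bundle monomorphisms correspond to sections of $\mathcal M$, and homotopies of monomorphisms fixed near $U$ correspond to homotopies of sections rel a neighbourhood $W$ of $U$.

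\emph{Step 1 (difference class).} I would take a CW structure on the pair $(S^4,W)$, or work with $S^4\times[0,1]$ relative to $S^4\times\{0,1\}\cup W\times[0,1]$. On this, $A_0$ and $A_1$ together with the constant homotopy on $W$ give a section over the subcomplex. Since the fibre is $3$-connected, the section extends over the $4$-skeleton of $S^4\times[0,1]$ relative to this subcomplex, that is, over cells of $S^4\setminus W$ of dimension at most $3$ crossed with $[0,1]$. The only obstruction lies in
\[
d(A_0,A_1)\in H^4(S^4,W;\pi_4(V_4(\R^8)))\cong H^4(S^4,W;\Z).
\]
By excision and Lefschetz duality, $H^4(S^4,W)\cong H^4(S^4\setminus\Int W,\partial)\cong H_0(S^4\setminus \Int W)$. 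The complement of a tubular neighbourhood of a closed surface is connected, since a codimension-two submanifold does not separate. So this group is $\Z$, and the monomorphisms are homotopic rel $W$ if and only if $d(A_0,A_1)=0$. One should double-check that $\pi_1$ of the fibre acts trivially, so that the coefficients are untwisted; this holds because $V_4(\R^8)$ is simply connected. Since $S^4$ is simply connected, there is no local-system issue on the base either.

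\emph{Step 2 (identify the class with $\en$).} The map $\Z\cong H^4(S^4,W)\to H^4(S^4)\cong\Z$ is an isomorphism, because $H^3(W)\to H^4(S^4,W)\to H^4(S^4)\to H^4(W)=0$ holds and $H^3(W)=0$, as $W$ retracts to a surface. Hence $d(A_0,A_1)$ may be computed as an absolute difference class localized in a small $4$-ball $D$ away from $U$. Concretely, after homotoping $A_1$ rel $W$ to agree with $A_0$ outside $D$, the two restrictions to $D$ glue along $\partial D$ into a map $S^4\to V_4(\R^8)$ whose class in $\pi_4\cong\Z$ is $d$.

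It remains to show that $\en(A_1)-\en(A_0)$ is a fixed nonzero multiple of this integer. This is the main obstacle. I would use the fibration $\SO(4)\to V_4(\R^8)\to \widetilde{G}_4(\R^8)$, so that the normal bundle is pulled back via the oriented Grassmannian, together with the fact that the Euler class of the universal oriented $4$-plane bundle evaluated on the image of the generator of $\pi_4(V_4(\R^8))$ in $\pi_4(\widetilde G_4(\R^8))$ is $\pm2$. This is the computation that for the standard Stiefel generator, the clutching of the complementary bundle changes the Euler number by $2$, analogous to Whitney's immersions of $S^n$ in $\R^{2n}$ having normal Euler numbers in $2\Z$ when $n$ is even. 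Additivity of the Euler number under this local modification then gives $\en(A_1)-\en(A_0)=\pm 2\,d(A_0,A_1)$. Since the factor is nonzero, $d=0$ if and only if $\en(A_0)=\en(A_1)$.

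The "only if" direction is immediate, since $\en$ is a homotopy invariant of monomorphisms.
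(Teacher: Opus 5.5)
Your proposal is correct and follows essentially the same route as the paper: relative obstruction theory for sections of $\Mon(TS^4,S^4\times\R^8)$ with $3$-connected fibre $V_4(\R^8)$, a single obstruction in $H^4(S^4,W;\Z)\cong H^4(S^4;\Z)\cong\Z$, and the fact that a generator of $\pi_4(V_4(\R^8))$ changes the normal Euler number by $\pm2$. The paper justifies that last fact via the boundary map of $\SO(4)\to\SO(8)\to V_4(\R^8)$, which identifies $\pi_4(V_4(\R^8))$ with $\ker\bigl(\pi_3(\SO(4))\to\pi_3(\SO(8))\bigr)$, a group generated by the clutching map of $TS^4$ (Euler number $2$); in your Grassmannian formulation, note that the class evaluating to $\pm2$ must be the Euler class of the complementary (normal) tautological bundle, since the tautological bundle of spans pulls back trivially to $V_4(\R^8)$.
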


\begin{proof}
The necessity follows from homotopy invariance of the Euler class. 

Recall that an orientable $\R^4$-bundle over $S^4$ is determined by the homotopy class of its clutching map, so an element of $\pi_3(\SO(4))$. The fibration $\SO(4)\hookrightarrow\SO(5)\to S^4$, where the projection is given by $A \mapsto A e_5$, is the principal frame bundle of the tangent bundle of $S^4$. Since $\pi_4(\SO(5))$ is torsion, this yields a short exact sequence
\begin{equation}\label{eq:4-bundles}
 0\longrightarrow\pi_4(S^4)
 \longrightarrow\pi_3(\SO(4))
 \longrightarrow\pi_3(\SO(5))\longrightarrow 0.
\end{equation}
The group $\Spin(4)\cong S^3 \times S^3$ is the double cover of $\SO(4)$, hence $\pi_3(\SO(4))\cong\Z\oplus\Z$, whereas $\pi_3(\SO(5))\cong\Z$ is already in the stable range. The above sequence is then
\[
0 \to \Z \to Z  \oplus \Z \to \Z \to 0,
\]
where the stabilization map sends $(a, b)$ to $a+b$ and the Euler number of a bundle corresponding to $(a,b)$ is $a-b$. Since $TS^4$ is stably trivial and its Euler number is $2$, its clutching function generates the kernel of the stabilization map; indeed, the tangent bundle corresponds to the identity map of $S^4$. 

Choose a slightly larger compact tubular neighbourhood $K$ of the surface such that $U\subset\Int K$ and $A_0=A_1$ on $K$. Consider the bundle
\[\mathcal M=\Mon(TS^4,S^4\times\R^8)\longrightarrow S^4.\]
Its fibre $F=\Mon(\R^4,\R^8)$ deformation retracts onto the Stiefel manifold $V_{8,4}$. Hence $F$ is $3$-connected and the fibration $\SO(4)\hookrightarrow\SO(8)\to V_{8,4}$ yields 
\begin{equation}\label{eq:stiefel-sequence}
 0\longrightarrow\pi_4(V_{8,4})
 \longrightarrow\pi_3(\SO(4))
 \longrightarrow\pi_3(\SO(8))\longrightarrow0.
\end{equation}
From the stability $\pi_3(\SO(5))=\pi_3(\SO(8))$ it follows that this sequence is isomorphic to \eqref{eq:4-bundles} and in particular 
$\pi_4(F)\cong\Z$. Using this identification, an element of $\pi_4(F)$ maps to its complementary four-plane bundle whose Euler number is twice that element.

Since $F$ is 3-connected and the sphere is 4-dimensional, by relative obstruction theory for homotopies of sections \cite[\S3.3, pp.~101--102]{Hatcher}, the only obstruction to a homotopy between $A_0$ and $A_1$ fixed on $K$ is
\[d_K(A_0,A_1)\in H^4(S^4,K;\pi_4(F))\cong H^4(S^4,K;\Z).\]
The coefficient system is constant because $S^4$ is simply connected. 
Since the regular neighbourhood $K$ deformation retracts onto the surface, the obstruction actually lives in $H^4(S^4;\Z)$. By the above, twice the evaluation of this obstruction on the fundamental class of the sphere is equal to the difference of the Euler classes, $\en(A_1)-\en(A_0)$, which vanishes by assumption. Thus $d_K(A_0,A_1)=0$, and the required homotopy exists relative to $K$. By relative smooth approximation, the homotopy may be chosen smooth, while keeping its endpoints and a neighbourhood of $U$ fixed.
\end{proof}

We use the following two relative forms of the totally real $h$-principle. They follow from Gromov's results~\cite[\S2.4.5]{Gromov}; see also~\cite[\S\S19.3--19.4]{EM} and the discussion in~\cite[p.~355]{Forstneric}.

\begin{enumerate}
\item[(i)] Let $W$ be a compact smooth four-manifold with boundary, let $f:W\to\C^4$ be smooth, and let $A:TW\to W\times\C^4$ be a totally real monomorphism equal to $df$ on a boundary collar. There is a totally real immersion $g:W\to\C^4$, equal to $f$ on a smaller collar, such that $dg$ is homotopic to $A$ through totally real monomorphisms relative to that collar.
\item[(ii)] If $f:W\hookrightarrow\C^4$ is an embedding, totally real near the boundary, and $df$ is homotopic through real monomorphisms, relative to a boundary collar, to a totally real monomorphism, then $f$ admits an arbitrarily $\mathcal C^0$-small isotopy to a totally real embedding, fixed on a smaller collar.
\end{enumerate}

The immersion statement starts with a formal solution and does not require a homotopy from $df$ to $A$. The embedding statement requires precisely such a homotopy.

\begin{lemma}\label{prop:normal-euler}
For the formal monomorphism $A$ of Lemma~\ref{prop:formal},
\begin{equation}\label{eq:normal-euler}
 \en(A)=\chi(S^4)+\chi(\Sigma)=2+\chi(\Sigma).
\end{equation}
\end{lemma}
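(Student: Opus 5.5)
We compute $\en(A)$ by a Webster-type localization: compare the real normal bundle $\nu_A$ with the complex structure carried by $A_{\C}$, and count the zeros of a section forced by the failure of total reality along $\Sigma$. Using the identification $S^4\times\C^4\cong S^4\times\R^8$, the orientations give the formal identity that, wherever $A$ is totally real, $\nu_A\cong J\,A(TS^4)\cong TS^4$. Indeed, multiplication by $i$ maps $A(TS^4)$ onto a complement, giving an orientation-compatible isomorphism $TS^4\to\nu_A$, $\xi\mapsto [iA\xi]$. The sign of this isomorphism relative to the chosen orientations is $(-1)^{n(n-1)/2}=+1$ for $n=4$, which should be checked once.

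\textbf{Step 1.} Take a vector field $X$ on $S^4$ with isolated nondegenerate zeros, away from $\Sigma$, chosen so that near $\Sigma$ it lies in the $\Sigma$-directions and has no zeros in the tube except possibly on $\Sigma$ itself. Better: choose $X=X_\Sigma$ near $\Sigma$, where $X_\Sigma$ is a vector field tangent to $\Sigma$ extended independently of $z$, with zeros at isolated points $q_j\in\Sigma$. Consider the section $s(x)=[\,iA(X(x))\,]$ of $\nu_A$. Away from $\Sigma$, its zeros coincide with those of $X$ with the same local indices, by the isomorphism above. Their total is $\chi(S^4)-\sum_j\mathrm{ind}_{q_j}X_\Sigma$, with appropriately transverse indices, once the contribution near $\Sigma$ is understood.

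\textbf{Step 2.} Perturb $s$ near $\Sigma$ and compute the local contribution there. This is the main obstacle. In the normal form \eqref{eq:normal-form}, i.e.\ $(x_1,x_2,z)\mapsto (x_1,x_2,\bar z^2,z)$, we have $dF(\partial_{x_k})=e_k$, $dF(\partial_u)=(0,0,2\bar z,1)$ and $dF(\partial_v)=(0,0,-2i\bar z,i)$. Then $i\,dF(\partial_{x_k})$ is normal, so along $\Sigma$ the natural section to use is $s=[i\,dF(X_\Sigma)]$. This section vanishes exactly where the complex tangent $i\cdot$ lies in the image, namely at $z=0$ along the $\partial_u,\partial_v$ directions, not the $\Sigma$-directions. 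It is therefore safer to replace $X$ near $\Sigma$ by a combination $X=X_\Sigma+\phi\,\partial_u$, with a cutoff $\phi$, and compute directly. In the tube, $s$ vanishes precisely at points of $\Sigma$ where $X_\Sigma=0$, transversally in the $(x_1,x_2)$-directions, while in the $z$-direction the relevant map behaves like $z\mapsto \bar z$ or $z\mapsto z$ composed with the index of $h=2\bar z$.

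The plan is thus to write $s$ locally near $q_j$ as a map $\R^4\to\R^4$ of the form $(x,z)\mapsto(X_\Sigma(x),\,\text{(linear in } z))$. Its local index is $\mathrm{ind}_{q_j}X_\Sigma$ times the degree of the $z$-part. Using $h=2\bar z$, this degree should come out so that each zero on $\Sigma$ is counted with the opposite sign to its count in the totally real region. The net effect is $\en(A)=\chi(S^4)-\sum_j(-1)\,\mathrm{ind}_{q_j}X_\Sigma=\chi(S^4)+\chi(\Sigma)$, which is \eqref{eq:normal-euler}.

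\textbf{Step 3 (cross-check).} As an alternative, a cleaner verification is available. Via the factorization $A_{\C}=B\diag(1,1,1,h)$, the obstruction to an isomorphism $\nu_A\cong TS^4$ is concentrated at $h^{-1}(0)=\Sigma$, where $h$ has degree $-1$ on normal disks. The difference $e(\nu_A)-e(TS^4)$ is then the pushforward of the Euler class of $T\Sigma$ weighted by this normal degree, giving $\pm\chi(\Sigma)$. I would fix the sign by computing on the model \eqref{eq:normal-form}, where Webster's formula $\chi(M)=\en+e-h+p$ predicts $\en=\chi(S^4)+\chi(\Sigma)$ for a fully hyperbolic locus. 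This confirms the sign $+$.
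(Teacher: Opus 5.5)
\textbf{The gap is in Step 2.} That step carries the whole content of the lemma, and it is not carried out: you never compute the local index of $s=[iA X]$ at the zeros on $\Sigma$. The sign you assert for it also does not give the formula you write. With your Step 1 bookkeeping, the zeros of $X$ off $\Sigma$ contribute $\chi(S^4)-\chi(\Sigma)$. If each zero of $s$ on $\Sigma$ is then counted with the opposite sign, $-\mathrm{ind}_{q_j}X_\Sigma$, the total is $\chi(S^4)-2\chi(\Sigma)$. So the expression $\chi(S^4)-\sum_j(-1)\,\mathrm{ind}_{q_j}X_\Sigma$ does not follow from Steps 1--2. The count also changes when you replace $X_\Sigma$, which vanishes on whole disks $\{q_j\}\times\D_\rho$, by $X_\Sigma+\phi\,\partial_u$, which has no zeros near $\Sigma$.

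Step 3 does not close the gap. Taken literally, weighting $\chi(\Sigma)$ by the degree $-1$ of $h=2\bar z$ gives $-\chi(\Sigma)$, the wrong sign; the correct local factor is $-\deg h$. Calibrating the sign with Webster's formula is circular. That formula is a theorem about immersions of closed $4$-manifolds. It says nothing about the formal monomorphism $A$, let alone a local model, until you have an immersion $g$ with $dg\simeq A$ whose CR-singular locus is exactly $\Sigma$ and hyperbolic.

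Producing such a $g$ is precisely the paper's proof. Apply the relative $h$-principle (i) to $A$ on $W=S^4\setminus\Int U_0$, where $A$ is totally real and equals $df_0$ near $\partial W$. Glue the resulting totally real immersion to $F$ on $U_0$, so that $\en(g)=\en(A)$. Webster's formula, with no elliptic or parabolic points and hyperbolic locus $\Sigma$, then gives $\chi(S^4)=\en(g)-\chi(\Sigma)$.

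Your localization can be completed into an independent proof that avoids both the $h$-principle and Webster's theorem, but only by doing the computation.

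\textbf{Setup.} Work in the normal form. The biholomorphic change of coordinates commutes with multiplication by $i$ and preserves the complex orientation, so nothing is lost. Let $\epsilon_1,\dots,\epsilon_4$ be the standard basis in the coordinates $(\eta_1,\eta_2,w,z)$, orient the tube by $(x_1,x_2,u,v)$, and write $X=X_\Sigma+Y_u\partial_u+Y_v\partial_v$ with $Y=Y_u+iY_v$.

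\textbf{The local section.} Project $iA(X)$ along $A(TS^4)$ onto the complement $i\R\epsilon_1\oplus i\R\epsilon_2\oplus\C\epsilon_3$. This gives $(iX_\Sigma,\,4i\bar z\,\bar Y)$.

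\textbf{The orientation.} The orientation of $\nu_A$ induced from $\C^4$ is \emph{opposite} to the obvious one on this complement. Indeed, at $z=0$ the tangent frame is $(\epsilon_1,\epsilon_2,\epsilon_4,i\epsilon_4)$, and $(\epsilon_1,\epsilon_2,\epsilon_4,i\epsilon_4,i\epsilon_1,i\epsilon_2,\epsilon_3,i\epsilon_3)$ is negatively oriented. Hence, in an oriented trivialization, $s\sim(X_\Sigma,\,-4izY)$. Its degree in $z$ is $1+\deg Y$, exactly one more than that of $X$.

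\textbf{Conclusion.} Take $Y\equiv1$ near $\Sigma$. Then $X$ has no zeros in the tube, while $s$ vanishes exactly at the points $(q_j,0)$, with index $\mathrm{ind}_{q_j}X_\Sigma$. Off $\Sigma$, the isomorphism $\xi\mapsto[iA\xi]$ is orientation-preserving, so the zeros there contribute $\chi(S^4)$. Together this yields $\en(A)=\chi(S^4)+\chi(\Sigma)$. With a radial $z$-component instead, the index at $(q_j,0)$ is $+2\,\mathrm{ind}_{q_j}X_\Sigma$, not $-\mathrm{ind}_{q_j}X_\Sigma$.
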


\begin{proof}
Choose nested closed tubes $U_0\subset\Int U_1$ around $\Sigma$ on which $A=dF$, and an embedding $f_0$ as in Lemma~\ref{lem:embedding-extension}, equal to $F$ on a neighbourhood of $U_1$. On
\[W=S^4\setminus\Int U_0,\]
$A$ is totally real and equals $df_0$ on a boundary collar. Apply (i), preserving a collar contained in $U_1\setminus\Int U_0$, and glue the resulting immersion to $F$ on $U_0$. This gives a smooth immersion $g:S^4\to\C^4$ whose CR singular locus is $\Sigma$, all tangencies are hyperbolic, and $dg$ is homotopic to $A$ relative to a neighbourhood of $\Sigma$. Hence $\en(g)=\en(A)$.

For a compact real $4$-manifold immersed in $\C^4$, Webster's formula is
\[\chi(M)=\en(g)+e-h+p,\]
where $e=\chi(N_e)$, $h=\chi(N_h)$, and $p$ is the parabolic index~\cite[Theorem~5.1]{Webster}. In our case $N_e=N_p=\varnothing$, and $N_h=\Sigma$. The formula reduces to
\[
 \chi(S^4)=\en(g)-\chi(\Sigma),
\]
which proves~\eqref{eq:normal-euler}.
\end{proof}

\section{Proof of Theorem~\ref{thm:main}}

\begin{proof}[Proof of Theorem~\ref{thm:main}]
Take $H$ to have genus two, so that $\chi(\Sigma)=-2$. Construct $F$, $f_0$, and $A$ as in Section~2. Lemma~\ref{prop:normal-euler} gives
\[\en(A)=2-2=0.\]
An oriented closed four-manifold embedded in $\R^8$ has normal Euler number equal to its self-intersection number, which is zero in $\R^8$. In particular, $\en(df_0)=0$.

Choose $0<\rho_0<\rho_1<\rho_2<\rho$ and set
\[U_j=\Psi\bigl(\Sigma\times\overline{\D}_{\rho_j}\bigr),\qquad j=0,1,2.\]
Then
\[U_0\subset\Int U_1\subset U_1\subset\Int U_2,\]
and $A=df_0=dF$ on a neighbourhood of $U_2$.
Applying Proposition~\ref{lem:relative-euler} relative to $U_2$ gives a homotopy
\[A_s:TS^4\longrightarrow S^4\times\C^4,\qquad A_0=df_0,\quad A_1=A,\]
through real bundle monomorphisms, fixed on $U_2$ and hence on a neighbourhood of $U_1$.

Set $W=S^4\setminus\Int U_0$. The endpoint $A_1$ is totally real on $W$, and the homotopy is fixed on the collar $U_2\setminus\Int U_0$, where $f_0=F$ is already totally real. Apply the relative embedding statement~(ii), choosing the smaller fixed collar to contain $U_1\setminus\Int U_0$. This gives an arbitrarily $\mathcal C^0$-small isotopy of $f_0|_W$, fixed on $U_1\setminus\Int U_0$, whose endpoint is totally real.

This isotopy can be glued to the unchanged embedding on $U_0$ without introducing intersections. Indeed, the compact sets
\[f_0(U_0)\quad\text{and}\quad f_0(S^4\setminus\Int U_1)\]
are disjoint and therefore have positive distance. Choose the above isotopy uniformly smaller than half this distance. Its moving part then stays disjoint from $f_0(U_0)$, while the fixed collar ensures smooth gluing. We obtain an isotopy of the entire embedded sphere.

The endpoint $f$ is totally real away from $\Sigma$ and agrees with~\eqref{eq:q} near $\Sigma$. The initial embedding $f_0$ was isotopic to the standard sphere by Lemma~\ref{lem:embedding-extension}, so the same holds for $f$.
\end{proof}

\section{Stein neighbourhoods}\label{sec:stein}

\begin{proof}[Proof of Theorem~\ref{thm:stein}] Put $M=f(S^4)$ and identify its CR-singular locus with
$\Sigma_0=\Sigma\times\{0\}$. Near $\Sigma_0$, $M$ agrees with the
model $F(p,z)=(p+\bar z^{\,2}n(p),z)$.

Let $\pi$ and $r$ denote the normal projection onto $\Sigma$ and the signed distance to $\Sigma$ on a tubular neighbourhood of $\Sigma\subset\R^3$. Since $\Sigma$ is real analytic, these maps have holomorphic extensions $P\colon\mathcal U\to\C^3$ and $W\colon\mathcal U\to\C$ on a common neighbourhood $\mathcal U$ of $\Sigma$ in $\C^3$. They satisfy
\[P(p+wn(p))=p,\qquad W(p+wn(p))=w, \qquad p\in\Sigma,\quad |w|\text{ small}.\]
At $p\in\Sigma$, the differentials of $P$ and $W$ are the tangential and normal projections, respectively. Thus, after shrinking, $(P,W)$ is a holomorphic immersion. Moreover, $P(P(X))=P(X)$ by
holomorphic continuation, so $P(X)\in\R^3$ implies $p=P(X)\in\Sigma$.
The displayed identities and local injectivity of $(P,W)$ then give
$X=p+W(X)n(p)$. Consequently, near $\Sigma_0$, $M$ is given precisely by the equations
\[\Imag P(X)=0,\qquad W(X)=\bar z^{\,2}.\]
After shrinking, choose an ambient neighbourhood $\mathcal V_0\subset\mathcal U\times\D_1$ of $\Sigma_0$ on which the above equations describe $M$. Define
\begin{equation}\label{eq:Phi} \Phi(X,z)=(1+|z|^2)
       \bigl(|\Imag P(X)|^2+|W(X)-\bar z^{\,2}|^2\bigr).
\end{equation}
The function $\Phi$ is nonnegative on $\mathcal V_0$ and satisfies
\[
  \Phi^{-1}(0)=M\cap\mathcal V_0.
\]
To check its Levi form, consider the function
\[\Psi(\zeta,w,z)=(1+|z|^2)\bigl(|\Imag\zeta|^2+|w-\bar z^{\,2}|^2\bigr), \qquad (\zeta,w,z)\in\C^3\times\C\times\C.\]
We have $\Phi(X,z)=\Psi(P(X),W(X),z)$. The Levi matrix of $\Psi$ is
\begin{equation}\label{eq:aux-Levi-matrix}
  \begin{pmatrix}
    \dfrac{1+|z|^2}{2}I_3 & 0 & -iz\,\Imag\zeta\\[5pt]
    0 & 1+|z|^2 & z(\bar w-z^2)\\[3pt]
    i\bar z(\Imag\zeta)^T & \bar z(w-\bar z^{\,2}) & \Psi_{z\bar z}
  \end{pmatrix},
\end{equation}
where direct differentiation gives
\begin{equation}\label{eq:aux-last-entry}
  \Psi_{z\bar z}={}|\Imag\zeta|^2+|w-\bar z^{\,2}|^2+4|z|^2+8|z|^4-4\Real(wz^2).
\end{equation}
The upper-left $4\times4$ block is positive definite. Its Schur
complement is
\begin{equation}\label{eq:aux-Schur}
\begin{aligned}
  &\Psi_{z\bar z}
   -\frac{|z|^2}{1+|z|^2}
       \bigl(2|\Imag\zeta|^2+|w-\bar z^{\,2}|^2\bigr)\\
  &\quad=\frac{1-|z|^2}{1+|z|^2}|\Imag\zeta|^2
       +\frac{|w-(3+2|z|^2)\bar z^{\,2}|^2}{1+|z|^2}
       +4|z|^2(1-|z|^4).
\end{aligned}
\end{equation}
For $|z|<1$, all terms on the right are nonnegative, and their sum
vanishes precisely on
\[
  E=\{\Imag\zeta=0,\ w=z=0\}.
\]
Thus $\Psi$ is plurisubharmonic for $|z|<1$ and strictly
plurisubharmonic outside $E$, in particular outside its zero set.
Since $(X,z)\mapsto(P(X),W(X),z)$ is a holomorphic immersion whose
inverse image of $E$ is $\Sigma_0$, holomorphic pullback shows that
$\Phi$ is plurisubharmonic and strictly plurisubharmonic outside
$\Sigma_0$.

To check the real normal Hessian, set
\[
  a=\bigl(\Imag P_1,\Imag P_2,\Imag P_3,
           \Real(W-\bar z^{\,2}),\Imag(W-\bar z^{\,2})\bigr).
\]
At $(p,0)\in\Sigma_0$, the differential of $\Imag P$ has rank two on the imaginary tangential directions, while the real and imaginary parts of $W$ provide two additional independent normal differentials. Thus $da$ has rank four along $\Sigma_0$. By continuity and compactness, its rank is at least four near $\Sigma_0$; on $M\cap \mathcal V_0$ it is at most four because $a=0$ there. Hence $\operatorname{rank}da=4$ and $\ker da=TM$ along $M\cap \mathcal V_0$, after shrinking the neighbourhood. Since $\Phi=(1+|z|^2)|a|^2$ and $a=0$ on $M\cap \mathcal V_0$, we have
\[
  \operatorname{Hess}\Phi
    =2(1+|z|^2)\sum_{j=1}^5 da_j\otimes da_j
    \qquad\text{on }M\cap \mathcal V_0.
\]
Consequently, at every point $q\in M\cap\mathcal V_0$, the real Hessian of $\Phi$ is positive semidefinite of rank four, with kernel exactly $T_qM$. In particular, it is positive definite in the normal directions, and $d\Phi=0$ on $M\cap \mathcal V_0$.

We now extend the construction to a neighbourhood of the whole sphere. The squared Euclidean distance
\[d_M(q)=\operatorname{dist}(q,M)^2\]
is smooth on a tubular neighbourhood of $M$. Both $d_M$ and its differential vanish on $M$, and the real Hessian of $d_M$ is positive definite in the normal directions along $M$. It is strictly plurisubharmonic near every totally real point of $M$. Choose a smooth cutoff function $\chi\in C_c^\infty(\mathcal V_0)$, with $0\le\chi\le1$, which equals one near $\Sigma_0$, and extend it by zero to $\C^4$. On a sufficiently small ambient neighbourhood $\mathcal V$ of $M$, put
\[
  \rho=\chi\Phi+(1-\chi)d_M,
\]
with $\chi\Phi$ extended by zero outside $\mathcal V_0$. Where the two functions are glued, both they and their differentials vanish on $M$. All terms involving derivatives of $\chi$ therefore disappear from their real and complex Hessians along $M$. In particular,
\[\mathcal L_\rho(q;\cdot)=\chi(q)\mathcal L_\Phi(q;\cdot)+(1-\chi(q))\mathcal L_{d_M}(q;\cdot),\qquad q\in M,\]
where the first summand is understood as zero outside the support of $\chi$. The transition occurs on a compact subset of $M\setminus\Sigma_0$, where both Levi forms are positive definite. By continuity and compactness, positivity persists on an ambient neighbourhood of this transition set. After shrinking around $M$, the function $\rho$ is therefore plurisubharmonic and strictly plurisubharmonic off $\Sigma_0$. Moreover, $\rho\geq0$, $\rho^{-1}(0)=M$, $d\rho|_M=0$, and its real normal Hessian is positive definite along $M$.

Since $\rho^{-1}(0)=M$, $d\rho=0$ on $M$, and the real Hessian of $\rho$ is positive definite in the normal directions, after shrinking $\mathcal V$ around $M$, the sufficiently small sublevel sets
\[\Omega_\varepsilon=\{q\in\mathcal V:\rho(q)<\varepsilon\}
\]
form a basis of relatively compact tubular neighbourhoods of $M$. Their boundaries are regular level sets of $\rho$ and are strongly pseudoconvex, since $\rho$ is strictly plurisubharmonic outside $\Sigma_0$. Hence each $\Omega_\varepsilon$ is Stein.

Finally, fix $q\in M$ and choose $s>0$ such that
$\overline B(q,s)\subset\mathcal V$. For $K=M\cap\overline B(q,s)$, polynomial convexity of the closed ball gives $\widehat K\subset\overline B(q,s)\subset \mathcal V$. By the plurisubharmonic characterization of polynomial hulls,
\[\rho\leq\sup_K\rho=0\quad\text{on }\widehat K.\]
Since $\rho\geq0$ and $\rho^{-1}(0)=M$, it follows that $\widehat K\subset M\cap\overline B(q,s)=K$. Hence $K$ is polynomially convex, proving local polynomial convexity of $M$.
\end{proof}

\subsection*{AI Declaration} The initial proof of Lemma 2.2 was provided by ChatGPT 5.6 Sol (OpenAI), which suggested the isotopy construction extending the local tubular model to an embedding of the sphere. ChatGPT was also used to assist with mathematical calculations, to explore proof arguments, and to improve the exposition. The authors have carefully checked and revised the AI-assisted arguments and take full responsibility for the mathematical correctness and content of the paper.

\bibliographystyle{amsplain}

\end{document}